\newtheorem{thm}{Theorem}
\newtheorem{prop}[thm]{Proposition}
\newtheorem{theorem}[thm]{Theorem}
\newtheorem{question}[thm]{Question}
\newtheorem{lemma}[thm]{Lemma}
\newtheorem{corollary}[thm]{Corollary}
\newtheorem{proposition}[thm]{Proposition}
\theoremstyle{definition}
\newtheorem{remark}[thm]{Remark}
\newtheorem*{claim1}{Claim 1}
\newtheorem*{claim2}{Claim 2}
\newcommand{\RP}{{\mathbb{RP}}{}^{3}}
\newcommand{\R}{\mathbb{R}}
\newcommand{\Z}{\mathbb{Z}}
\newcommand{\Q}{\mathbb{Q}}
\def \x {\times}
\def \sign{{\text{sign}}}
\def\G{\Gamma}
 \def\Q{\mathbb{Q}}  \def\Z{\mathbb{Z}} \def\R{\mathbb{R}} 
\def\N{\mathbb{N}}    
 \def\a{\alpha}   \def\bp{\begin{pmatrix}}
\def\sm{\setminus} \def\ep{\end{pmatrix}} \def\bn{\begin{enumerate}} 
   \def\en{\end{enumerate}}
\def\ba{\begin{array}} \def\ea{\end{array}} 
 \def\S{\Sigma}  \def\a{\alpha}  \def\ti{\tilde}
   \def\sign{\mbox{sign}}
\def\ker{\mbox{Ker}}\def\be{\begin{equation}} \def\ee{\end{equation}} 
 \def\hom{\mbox{Hom}}
\def\wti{\widetilde}
\def\co{\colon}
\def\wti{\widetilde}
\begin{document}

\title[Virtually symplectic fibered $4$-manifolds]
{Virtually symplectic fibered $4$-manifolds}

\author[R. \.{I}. Baykur]{R. \.{I}nan\c{c} Baykur}
\address{
Department of Mathematics and Statistics\\
 University of Massachusetts\\
  Amherst MA 01003-9305, USA}
 \email{baykur@math.umass.edu}

\author[S. Friedl]{Stefan Friedl}
\address{Fakult\"at f\"ur Mathematik\\ Universit\"at Regensburg\\93040 Regensburg\\   Germany}
\email{sfriedl@gmail.com}

\begin{abstract}
We mostly determine which closed smooth oriented $4$-manifolds fibering over lower dimensional manifolds are virtually symplectic, i.e. finitely covered by symplectic $4$-manifolds.
\end{abstract}

\maketitle

\setcounter{secnumdepth}{2}
\setcounter{section}{0}

\section*{Introduction}

Given a closed smooth oriented $4$-manifold $X$, one can ask the following:
\begin{enumerate}
\item [\textbf{Q1:}] Is $X$ symplectic, i.e.\,does it admit a closed non-degenerate $2$-form?
\item [\textbf{Q2:}] Is $X$ virtually symplectic, i.e.\,does it admit a finite cover that is symplectic?
\end{enumerate}
\smallskip
\noindent A well-known consequence of $X$ admitting a symplectic form is that it admits non-trivial Seiberg-Witten invariants by the work of Taubes \cite{Ta94,Ta95}, which in turn prevents $X$ or any finite cover from being  a connected sum of $4$-manifolds with $b^+>1$.

A finite cover of a symplectic $4$-manifold can always be equipped with a symplectic form, but the converse is false, i.e. not every $4$-manifold that is finitely covered by a symplectic $4$-manifold is necessarily symplectic itself.
For example, if $N$ is a non-fibered hyperbolic $3$-manifold, then by work of Agol \cite{Ag13} $N$ admits a finite cover $\ti{N}$ which is fibered, and it follows from \cite{Th76} that $S^1\times \ti{N}$ is symplectic whereas by \cite{FV11a} the manifold $S^1\times N$ is not symplectic. (Also see Proposition \ref{lem:abc} below.) Although these two questions are strongly interrelated, it is clear that one has more freedom when showing that $X$ is virtually symplectic and less when showing that it is actually symplectic.

In this paper we  will restrict ourselves to  $4$-manifolds which fiber over a smaller non-zero dimensional manifold.  The recent dramatic increase in our understanding of $3$-manifolds due to the work of Agol \cite{Ag08,Ag13}, Przytycki-Wise \cite{PW12} and Wise \cite{Wi12a,Wi12b} now makes it possible to settle \textbf{Q1} and \textbf{Q2}  for many such $4$-manifolds. For example \textbf{Q1} has been completely answered for $4$-manifolds which fiber over a $3$-manifold in \cite{FV11a,FV13} and for those which fiber over a surface in \cite{Th76,Geiges,Walczak}. To the best of our knowledge, the case of $4$-manifolds which fiber over $S^1$ has not been dealt with in any detail.
In this paper we will now mostly focus on \textbf{Q2}. Our goal is to determine when $X$ is virtually symplectic, in terms of the data provided by the fiber bundle structure, in particular by the topology of the base and the fiber.

We will say a $4$-manifold $X$ is \textit{fibered} if it admits a smooth bundle map $f\colon X\to Z$
where the fiber $Y$ and the base $Z$ are non-zero dimensional orientable smooth manifolds. We will often encode this information in the form $Y \hookrightarrow X \stackrel{f}{\rightarrow} Z$. Following the usual convention, we will call a $3$-manifold $Y$ \textit{fibered} if it fibers over\,$S^1$.

Our study naturally breaks down into three different cases corresponding to the possible dimensions of the base (and the complementary dimensions of the fiber), which we will take on in separate sections below. Our results can be reorganized and summarized in a pair of theorems as follows, which answer the existence question in the positive and in the negative, respectively:
\\

\noindent \textbf{Theorem A.} \emph{
Let $X$ be a  $4$-manifold and $A \hookrightarrow X \stackrel{f}{\rightarrow} B$ be a fiber bundle. $X$ is virtually symplectic if one of the following conditions is satisfied:
\begin{itemize}
\item[(1)] $B$ is an irreducible $3$-manifold which is not a graph manifold; \
\item[(2)] $A$ is a homologically essential surface in $X$, or $B$ is a $2$-torus;\
\item[(3)] $A$ is an irreducible $3$-manifold which has only hyperbolic pieces in its JSJ decomposition.
\end{itemize}
}
As a partial converse to Theorem~A we can now formulate the following theorem:

\noindent \textbf{Theorem B.}\emph{
Let $X$ be a  $4$-manifold and $A \hookrightarrow X \stackrel{f}{\rightarrow} B$ be a fiber bundle. $X$ is not virtually symplectic if:
\begin{itemize}
\item[(1)] $B$ is a not prime $3$-manifold or if $B$ is  a non-virtually fibered graph manifold;
\item[(2)] $A$ is homologically inessential and either $A$ or $B$ is not a torus;
\item[(3)] $A$ is a connected sum of non-spherical $3$-manifolds and the monodromy of $f$ preserves the separating $2$-sphere.
\end{itemize}
}
\noindent More generally, the statement of the theorem holds for any $X$ finitely covered by a fibered $4$-manifold as above.

\begin{remark} \noindent{\textbf{(1)}} We almost completely determine which $4$-manifolds fibering over a $3$-manifold are virtually symplectic. Our results make essential use of the recent work of Agol, Przytycki and Wise \cite{Ag08,Ag13, Wi12a,Wi12b, PW12}, and rely on \cite{FV13,FV12}. The only case we can not address completely is when the base manifold is a (virtually) fibered graph manifold: see the example and the discussion in Section \ref{section:example13}. \\
\noindent \textbf{(2)} The case of surface bundles over surfaces is an almost immediate consequence of the work by Thurston \cite{Th76}, Geiges \cite{Geiges} and Walczak \cite{Walczak}. In particular we will see that a $4$-manifold that fibers over a surface is virtually symplectic if and only if it is symplectic: see Theorem \ref{theorem22}. \\
\noindent \textbf{(3)} In \cite[Section 13.6]{Hillman}, Questions \textbf{Q1} and \textbf{Q2} for fibered $4$-manifolds are discussed by Hillman in terms of the \textit{$4$-manifold geometries}.
\end{remark}

We see that when $X$ admits a fiber bundle structure such that either the base or the fiber is an irreducible $3$-manifold $Y$, then $X$ is virtually symplectic, unless $Y$ has at least one Seifert fibered piece in its JSJ decomposition. Moreover, when $X$ fibers over a surface, it fails to be symplectic only if it is an $S^1$-bundle over an $S^1$-bundle over a surface, i.e. a trivial Seifert fibered $3$-manifold. Hence a heuristic conclusion we can make is that a fibered $4$-manifold $X$ fails to be virtually symplectic only if the fiber bundle structure ``contains a Seifert fibered $3$-manifold in an essential way'' as above.

\enlargethispage{1cm}
\vspace{0.1in}
\noindent \textit{Conventions.}  All manifolds in this article are assumed to be smooth, closed, connected and oriented, unless we say explicitly otherwise. Similarly, all covering and bundle maps are assumed to be smooth.
A graph manifold is an irreducible $3$-manifold such that all JSJ components are Seifert fibered spaces,
in particular a Seifert fibered space is a graph manifold.

\section{Fibering over a $3$-manifold}

In this section we will prove the following theorem which is precisely part (1) of Theorem~A and Theorem~B.

\begin{theorem}\label{theorem13}
Let $X$ be a  $4$-manifold which is a fiber bundle over a $3$-manifold $N$.  Then the following hold:
\bn
\item[(A)] $X$ is virtually symplectic if $N$ is prime and if $N$ is not a  graph manifold.
\item[(B)] $X$ is not virtually symplectic if $N$ is not virtually fibered, e.g. if $N$ is not prime.
\en
\end{theorem}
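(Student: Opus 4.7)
The plan is as follows. The hypothesis that $X^4$ fibers over the $3$-manifold $N$ forces the fiber to be a closed connected oriented $1$-manifold, hence $S^1$. So $f\co X\to N$ is an $S^1$-bundle with some Euler class $e\in H^2(N;\Z)$.

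\emph{Part (B).} I will argue by contraposition: suppose $X$ admits a finite cover $\wti X$ which is symplectic, and derive a contradiction. The homotopy long exact sequence of the $S^1$-bundle $X\to N$ gives
\[
1\to \Z\to \pi_1(X)\to \pi_1(N)\to 1.
\]
The finite-index subgroup $\pi_1(\wti X)\le \pi_1(X)$ must meet the fiber factor $\Z$ in a finite-index (hence still infinite cyclic) subgroup, and its image in $\pi_1(N)$ has finite index. This produces a finite cover $\wti N\to N$ together with an $S^1$-bundle structure $\wti X\to \wti N$ compatible with $\wti X\to X\to N$. I then invoke the theorem of Friedl-Vidussi from \cite{FV11a,FV13}: if an $S^1$-bundle over a closed $3$-manifold has symplectic total space, then the base fibers over $S^1$. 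Thus $\wti N$ is fibered, so $N$ is virtually fibered, contradicting the hypothesis of (B).

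\emph{Part (A).} By the combined work of Agol \cite{Ag13}, Wise \cite{Wi12a,Wi12b}, and Przytycki-Wise \cite{PW12}, every closed prime $3$-manifold which is not a graph manifold is virtually fibered. Applying this to $N$, I obtain a finite cover $\wti N\to N$ admitting a fibration $\phi\co\wti N\to S^1$, and I let $\wti X\to\wti N$ denote the pulled-back $S^1$-bundle, which is a finite cover of $X$. To equip $\wti X$ (or a further finite cover) with a symplectic structure, I will invoke the existence direction of the Friedl-Vidussi criterion from \cite{FV13}: an $S^1$-bundle over a fibered $3$-manifold admits a symplectic form provided its Euler class and the chosen fibered class are suitably compatible. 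If that compatibility fails for $\phi$ on $\wti N$, I will use that $\pi_1(\wti N)$ is virtually RFRS (Agol's theorem), so that further finite covers have arbitrarily rich fibered cones in their Thurston norm balls, and pass to a cover on which a compatible fibered class exists.

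The main obstacle I anticipate lies in part (A): once the virtual fibering of $N$ is secured, one must still choose the correct finite cover (and fibered class on it) so that the pulled-back Euler class satisfies the Friedl-Vidussi compatibility. Part (B) is by contrast essentially immediate, being a direct obstruction argument once one recognizes that an $S^1$-bundle structure is automatically inherited by finite covers.
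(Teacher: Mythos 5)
Your part (B) is correct and matches the paper: the $S^1$-bundle structure descends to any finite cover, so a symplectic finite cover of $X$ is an $S^1$-bundle over a finite cover $\wti N$ of $N$, and the Friedl--Vidussi criterion then forces $\wti N$ to be fibered, contradicting the hypothesis that $N$ is not virtually fibered (in particular $N$ non-prime is never virtually fibered).

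Part (A) is a correct description of the strategy, but the step you flag as ``the main obstacle'' is exactly where the real work lies, and you do not carry it out. Recall the precise Friedl--Vidussi criterion: $X\to N$ is symplectic if and only if $p_*(H^2(X;\Q))\subset H^1(N;\Q)$ contains a fibered class, where by the Gysin sequence $p_*(H^2(X;\Q))=\ker(\cup\, e)$. The difficulty is that virtual fibering of $N$ alone does \emph{not} guarantee that some cover has a fibered class lying in this kernel; one must coordinate the choice of cover with the Euler class. The paper's resolution splits into two cases. If $e$ is torsion, one passes to a finite cover where the pulled-back bundle is trivial, reducing to $S^1\times\wti N$, where $p_*$ is onto and virtual fibering suffices. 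If $e$ is non-torsion, then $p_*(H^2(X;\Q))$ has codimension exactly one in $H^1(N;\Q)$; one invokes Agol's result that a finite cover of $N$ has Thurston norm ball with at least three distinct top-dimensional faces, so a codimension-one subspace must meet some top-dimensional face $F$; one then picks $\phi$ in this intersection, passes (via virtual RFRS) to a cover where $\phi$ pulls back to a quasi-fibered class, and shows that the codimension-one subspace now meets the \emph{interior} of an adjacent fibered face. Your sketch gestures at ``rich fibered cones'' but omits the codimension-one observation and the ``three top-dimensional faces'' input, both of which are essential; as written, there is no argument that a compatible cover and fibered class can actually be found. This is a genuine gap in part (A).
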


Theorem \ref{theorem13} settles the question which $S^1$-bundles over a $3$-manifold are virtually symplectic
except for the case that the base manifold is a virtually fibered graph manifold. In Section \ref{section:example13} we will see that the latter case is in fact more delicate and does not allow a simple statement. In particular we will see that there exists a fiber bundle $X$ over a virtually fibered graph manifold such that $X$ is not virtually symplectic.

Before moving on to the proof of Theorem~\ref{theorem13}, let us note the following corollary:

\noindent \textbf{Corollary.}\emph{
Let $X$ be a  $4$-manifold which is a fiber bundle over a $3$-manifold $B$. If $B$ is irreducible and not a graph manifold, then $X$ admits a finite cover $\ti{X}$ which is symplectic with $b_2^+(\ti{X})>1$.
}

Recall that this implies in particular that $\ti{X}$, and hence $X$, is not the connected sum of $4$-manifolds with $b^+>1$.

\begin{proof}
Let $B$ be a $3$-manifold which is irreducible and not a graph manifold
and let $X$ be a  $4$-manifold which is a fiber bundle over $B$.
We have to show that  $X$ admits a finite cover $\ti{X}$ which is symplectic with $b^+(\ti{X})>1$.

By Theorem \ref{theorem13}~(A) there exists a finite cover $X'$ of $X$ which is symplectic. Note that $X'$ is again an $S^1$-bundle over a $3$-manifold $B'$, where $B'$ is a finite cover of $B$. In particular $B'$ is a $3$-manifold which is irreducible and not a graph manifold. On the other hand, by Theorem \ref{thm:apw2}~(1) of Agol-Przytycki-Wise quoted below, the manifold $B'$ admits a finite cover $\ti{B}$ with $b_1(\ti{B})\geq 3$. We denote by $\ti{X}$ the cover of $X'$ corresponding to the cover $\ti{B}\to B'$.

The signature of any $S^1$-bundle over a $3$-manifold is zero, in particular \linebreak $\sign(\ti{X})=0$. It furthermore follows from the Gysin sequence that $b_2(\ti{X})=2b_1(\ti{B})$ (if the Euler class $e$ of the $S^1$-bundle $\ti{X}\to \ti{B}$ is torsion) or $b_2(\ti{X})=2b_1(\ti{B})-2$ (if  $e$  is non-torsion). In either case we conclude that
\[ b^+(\ti{X})=\frac{1}{2}b_2(\ti{X})\geq b_1(\ti{B})-1\geq 2.\]
Finally note that $\ti{X}$ inherits a symplectic structure from $X'$.
\end{proof}

\subsection{Symplectic $4$-manifolds and fibered $3$-manifolds}

Let $N$ be a 3-manifold and let $\phi\in H^1(N;\Z)=\hom(\pi_1(N),\Z)$. We say $\phi$ is a \emph{fibered class} if there exists a surface bundle map $p\colon N\to S^1$  such that $\phi=p_*\colon \pi_1(N)\to \Z=\pi_1(S^1)$.
We furthermore say that a rational class $\phi \in H^1(N;\Q)$ is fibered, if a non-zero integer multiple is fibered.
Note  that a $3$-manifold is fibered if and only if  it admits a fibered class.

We can now formulate the following theorem which is a key ingredient to the proof of Theorem \ref{theorem13}.

\begin{theorem}\label{thm:symp}
Let $N$ be a 3-manifold and let $p\colon X\to N$ be an $S^1$-bundle over $N$. We denote by $p_*\colon H^2(X;\Q)\to H^1(N;\Q)$ the map that is given by integration along the fiber. Then  $X$ is symplectic if and only if the image of $p_*$ contains a fibered class.
\end{theorem}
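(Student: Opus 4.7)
The plan is to prove the two implications separately. I would handle the ``if'' direction by a direct Thurston-style construction using a connection on the $S^1$-bundle, while I would establish the ``only if'' direction by invoking Seiberg-Witten theory together with the Friedl--Vidussi characterization of fibered classes of $3$-manifolds.

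For the ``if'' direction, suppose $\omega \in H^2(X;\Q)$ satisfies $\phi := p_*\omega$ is fibered. I would first represent a positive integer multiple of $\phi$ by a closed, nowhere-vanishing $1$-form $\alpha$ on $N$, namely the pullback of $d\theta$ under the fibration $q\colon N \to S^1$ associated to $\phi$. The Gysin sequence of $p\colon X \to N$ identifies the image of $p_*$ with $\ker(\smile e)\subset H^1(N;\Q)$, where $e \in H^2(N;\Q)$ is the Euler class; so the hypothesis gives $e \smile \phi = 0$ in $H^3(N;\Q)$. Choosing a closed $2$-form $e_0$ on $N$ representing $e$ and a connection $1$-form $\eta$ on $X$ with $d\eta = p^*e_0$, the equation $d\beta = -e_0 \wedge \alpha$ is solvable at the de Rham level for some $2$-form $\beta$ on $N$. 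After adding a large positive multiple of a closed $2$-form $\beta_0$ on $N$ that is positive on the fibers of $q$ (such a form exists since the fibered structure on $N$ gives a taut foliation), the resulting $\beta$ still satisfies $d\beta = -e_0 \wedge \alpha$ and additionally $\beta \wedge \alpha > 0$ pointwise on $N$. I would then verify that
\[ \omega_X := p^*\beta + \eta \wedge p^*\alpha \]
is closed, satisfies $\omega_X \wedge \omega_X = 2\,\eta \wedge p^*(\beta \wedge \alpha) > 0$ pointwise, and has cohomology class mapping to $\phi$ under $p_*$.

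For the ``only if'' direction, suppose $X$ carries a symplectic form $\omega$. A small perturbation (using openness of non-degeneracy and density of rational classes in $H^2(X;\R)$) lets me assume $[\omega] \in H^2(X;\Q)$, so $\phi := p_*[\omega]$ is rational. The goal is to show $\phi$ is a fibered class of $N$. The strategy is to apply Taubes' theorem to obtain non-trivial Seiberg-Witten invariants of $X$, transfer the resulting constraints to twisted Alexander-type invariants of $N$ in the direction $\phi$ via the Gysin sequence for the $S^1$-bundle, and finally invoke the theorems of Friedl--Vidussi \cite{FV11a,FV12,FV13} characterizing fibered classes of $3$-manifolds through these Alexander-theoretic conditions; the latter results rest in an essential way on the virtual fibering and subgroup separability theorems of Agol, Przytycki--Wise, and Wise.

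The main obstacle is the ``only if'' direction: deducing fiberedness of $\phi$ from symplecticness of $X$ is a subtle passage between gauge theory on $X$ and the Thurston norm on $N$, and depends essentially on the Friedl--Vidussi machinery together with the recent progress on virtual fiberedness of $3$-manifolds. By contrast, the ``if'' direction is a direct differential-geometric construction once the Gysin-theoretic hypothesis $e \smile \phi = 0$ is unpacked.
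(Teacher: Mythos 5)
The paper does not actually prove Theorem~\ref{thm:symp}; as noted in the remark immediately following it, the authors simply cite the literature: the ``if'' direction to \cite{Bou88,FGM91,FV12,Th76} and the ``only if'' direction to \cite{FV13}, building on \cite{McCarthy,CM00,Et01,Bow09,FV11a,FV11b}. Your proposal is therefore compared against those references rather than against an in-paper argument.

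Your sketch of the ``if'' direction is essentially the Thurston--Bouyakoub--Fern\'andez--Gray--Morgan construction, and the key computations check out: the Gysin identification $\operatorname{im}(p_*) = \ker(\cup\, e)$ gives $e\smile\phi = 0$ so that $e_0\wedge\alpha$ is exact, and with $d\eta = p^*e_0$, $d\alpha = 0$, $d\beta = -e_0\wedge\alpha$ one indeed gets $d\omega_X = 0$ and $\omega_X\wedge\omega_X = 2\,\eta\wedge p^*(\beta\wedge\alpha)$. The one place you lean a little hard is the existence of a \emph{closed} $2$-form $\beta_0$ on $N$ that is positive on every fiber of $q$: Sullivan's taut-foliation characterization certainly delivers this, but you could also get it more directly by choosing an area form on a reference fiber, using Moser's trick to make it invariant under (a representative of) the monodromy, and pulling back to the mapping torus. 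As for the ``only if'' direction, you are right to treat it as a black box: it is the hard content of \cite{FV13} (combining Taubes' SW nonvanishing with a vanishing theorem for twisted Alexander polynomials of $3$-manifolds and the virtual fibering results of Agol and Przytycki--Wise), and no short self-contained proof is known. In short, your proposal is a correct and reasonably faithful outline of the arguments the paper invokes by reference, and it matches the cited route rather than introducing a genuinely different one.
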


\begin{remark} \noindent \textbf{(1)} The `if' direction of the theorem was proved in \cite{Bou88,FGM91,FV12,Th76} and the `only if' direction was proved in  \cite{FV13}, extending earlier work in \cite{McCarthy,CM00,Et01,Bow09,FV11a,FV11b}. This theorem was recently extended to $4$-manifolds with a fixed point free $S^1$-action by Bowden \cite{Bow12}. \\
\noindent \textbf{(2)} Any finite cover of $X$ is again an $S^1$-bundle over a finite cover of $B$. Theorem \ref{theorem13}~(B) is therefore an immediate consequence of Theorem \ref{thm:symp}.
\end{remark}

Note that if  $X$ is of the form $S^1\times N$, then the map  $p_*\colon H^2(X;\Q)\to H^1(N;\Q)$ is an epimorphism.
We thus obtain the following special case of Theorem \ref{thm:symp}, which we record here for later reference:

\begin{corollary}\label{cor:sympproduct}
Let $N$ be a 3-manifold. Then  $X=S^1\times N$ is symplectic if and only if $N$ is fibered.
\end{corollary}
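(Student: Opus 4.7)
The plan is to derive the corollary directly from Theorem~\ref{thm:symp} applied to the trivial $S^1$-bundle $p \co S^1 \times N \to N$. The only substantive point to verify is the claim, already asserted in the sentence preceding the corollary, that for a product the integration-along-the-fiber map $p_* \co H^2(X;\Q) \to H^1(N;\Q)$ is surjective. Once this is in hand, the image of $p_*$ contains a fibered class if and only if $H^1(N;\Q)$ contains a fibered class, which by definition is equivalent to $N$ being fibered.

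To verify surjectivity, I would invoke the Künneth decomposition
\[ H^2(S^1 \times N;\Q) \;\cong\; H^0(S^1;\Q)\otimes H^2(N;\Q)\;\oplus\; H^1(S^1;\Q)\otimes H^1(N;\Q), \]
(the $H^2(S^1)$ summand vanishes). Under integration along the $S^1$-fiber, the first summand is killed and the second summand is sent isomorphically onto $H^1(N;\Q)$: explicitly, if $\mathrm{d}t$ denotes a generator of $H^1(S^1;\Q)$ normalized so that $\int_{S^1}\mathrm{d}t = 1$, then a class of the form $\mathrm{d}t \wedge \alpha$ with $\alpha\in H^1(N;\Q)$ maps to $\alpha$. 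Equivalently, this is the Gysin sequence for the trivial bundle with Euler class zero, where the connecting map $\cup e$ vanishes and $p_*$ is forced to be onto.

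Combining these two steps: by Theorem~\ref{thm:symp}, $X = S^1 \times N$ is symplectic iff $\mathrm{image}(p_*)\subset H^1(N;\Q)$ contains a fibered class. Since $p_*$ is surjective, this happens iff $H^1(N;\Q)$ itself contains a fibered class, which by the definition recalled just before Theorem~\ref{thm:symp} is exactly the condition that $N$ fibers over $S^1$. There is no real obstacle here; the content is entirely absorbed by Theorem~\ref{thm:symp}, and the only check is the trivial cohomological computation above.
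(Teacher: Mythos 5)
Your proposal is correct and follows exactly the same route as the paper: the paper asserts in the sentence immediately preceding the corollary that $p_*$ is an epimorphism for a product and then invokes Theorem~\ref{thm:symp}, which is precisely your argument. The only thing you add is the (correct) explicit justification of the surjectivity of $p_*$ via K\"unneth or, equivalently, the Gysin sequence with vanishing Euler class, a verification the paper leaves implicit.
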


We can now show that the questions \textbf{Q1} and \textbf{Q2} in the introduction are indeed different questions.

\begin{proposition}\label{lem:abc} There exist fibered $4$-manifolds which are not symplectic but which are virtually symplectic.
\end{proposition}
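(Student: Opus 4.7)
The plan is to realize the example already sketched informally in the introduction, making it rigorous using the results stated earlier. Start by invoking Agol's virtual fibering theorem \cite{Ag13}: every closed hyperbolic $3$-manifold has a finite cover which fibers over $S^1$. I would then select a specific closed hyperbolic $3$-manifold $N$ which is \emph{not} itself fibered. The simplest way to guarantee this is to choose $N$ with $b_1(N)=0$, since a fibered class lives in $H^1(N;\Z)$; closed hyperbolic rational homology spheres are abundant, for instance among Dehn fillings on hyperbolic knots in $S^3$, so existence is not an issue. Let $\tilde N \to N$ be a finite cover with $\tilde N$ fibered, as supplied by Agol.

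Next I would set $X := S^1 \times N$, viewed as a fibered $4$-manifold via the trivial $S^1$-bundle projection $p\co X \to N$ (so it genuinely falls under the scope of Theorem~\ref{thm:symp}). Then $\tilde X := S^1 \times \tilde N$ is a finite cover of $X$ that is itself a trivial $S^1$-bundle over $\tilde N$.

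To verify the two required properties, I would apply Corollary~\ref{cor:sympproduct} twice: since $N$ is not fibered, $X = S^1\times N$ is not symplectic; since $\tilde N$ is fibered, $\tilde X = S^1 \times \tilde N$ is symplectic. Hence $X$ is a fibered $4$-manifold which is virtually symplectic but not symplectic, proving the proposition.

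There is essentially no obstacle here beyond citing the correct inputs; the only point requiring a moment of care is exhibiting a non-fibered closed hyperbolic $3$-manifold, which I handle by arranging $b_1=0$ so that no non-zero cohomology class, let alone a fibered one, exists in $H^1(N;\Z)$.
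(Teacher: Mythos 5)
Your proof is correct and takes essentially the same approach as the paper: choose a non-fibered but virtually fibered $3$-manifold $N$, form $X=S^1\times N$, and apply Corollary~\ref{cor:sympproduct} twice, once to $N$ and once to its fibered finite cover $\tilde N$. The only cosmetic difference is that you pin down a concrete source of non-fibered hyperbolic $3$-manifolds by taking $b_1(N)=0$, whereas the paper states the existence more loosely.
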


\begin{proof} Let $N$ be a $3$-manifold which is not fibered but which is virtually fibered. (In Theorem \ref{thm:apw2} we will see that $N$ could in fact be any non-fibered hyperbolic $3$-manifold, but more `hands on' examples can also be given by graph manifolds.) It now  follows from Corollary \ref{cor:sympproduct} that  $X=S^1\times N$ is not symplectic but that $X$ is virtually symplectic.
\end{proof}

\subsection{The virtual fibering theorem} \label{section:agol}

Let $N$ be a $3$-manifold. In the following we  say that a class $\phi \in H^1(N;\Q)$ is \emph{quasi--fibered}  if  any neighborhood of $\phi$ in $H^1(N;\Q)$ contains a fibered class. Note that in particular fibered classes are quasi--fibered. We furthermore say that $\phi$ is \emph{virtually quasi--fibered} if $N$ admits a finite cover $p\colon N'\to N$ such that $p^*(\phi)\in H^1(N';\Q)$ is quasi--fibered.

We can now formulate the following theorem which is the second key ingredient to the proof of Theorem \ref{theorem13}.

\begin{theorem}\label{thm:apw2}\textbf{\emph{(Agol--Przytycki-Wise)}}
Let $N$ be an irreducible 3-manifold which is not a  graph manifold. Then the following hold:
\bn
\item Given any $k\in \N$ there exists a finite cover $\ti{N}$ of $N$ with $b_1(\ti{N})>k$.
\item Any  class in $H^1(N;\Q)$ is virtually quasi-fibered.
\en
\end{theorem}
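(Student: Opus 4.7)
The plan is to reduce both statements to recent work on virtual specialness of $3$-manifold groups together with Agol's RFRS virtual fibering criterion. Under our hypothesis, $N$ is irreducible and its JSJ decomposition contains at least one hyperbolic piece. The combined work of Agol \cite{Ag13}, Wise \cite{Wi12a,Wi12b}, and Przytycki-Wise \cite{PW12} then yields that $\pi_1(N)$ is virtually compact special in the sense of Haglund-Wise, and in particular that $\pi_1(N)$ is virtually RFRS.

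For (2), I would apply Agol's virtual fibering theorem \cite{Ag08}, which asserts that for an irreducible $3$-manifold with virtually RFRS fundamental group and any class $\phi\in H^1(N;\Q)$, there is a finite cover $p\colon \ti{N}\to N$ such that $p^*\phi$ lies in the closed cone over some fibered face of the Thurston norm ball of $\ti{N}$. By Thurston's fibered face theorem, fibered classes are dense in this closed cone, so $p^*\phi$ is quasi-fibered, which is precisely the statement that $\phi$ is virtually quasi-fibered.

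For (1), I would invoke \emph{largeness} of $\pi_1(N)$, which is a consequence of virtual compact specialness together with the presence of a non-abelian free subgroup (for closed hyperbolic pieces this follows from \cite{Ag13}, and in the mixed case it follows from \cite{PW12}): some finite index subgroup $H\leq \pi_1(N)$ admits a surjection $\varphi\colon H\twoheadrightarrow F_2$. Since finite-index subgroups of $F_2$ have unbounded first Betti number, pulling these back through $\varphi$ produces finite-index subgroups of $\pi_1(N)$, and hence finite covers of $N$, with $b_1(\ti{N})>k$ for any prescribed $k\in\N$.

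I expect the main obstacle to be marshalling the precise statements of the black-box theorems from \cite{Ag08,Ag13,PW12,Wi12a,Wi12b}: in particular, invoking Agol's fibering criterion in the strong form that delivers quasi-fibering for \emph{every} class (not merely the existence of a single fibered class on some cover), and verifying that virtual specialness is known uniformly in the mixed non-graph setting and not only in the closed hyperbolic one. Once these points are settled, each of (1) and (2) is essentially a translation of the black-box conclusions into the language of (virtually) quasi-fibered classes.
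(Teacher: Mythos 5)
Your proof of part (2) is essentially the paper's argument: both pass from virtual specialness (Agol, Wise, Przytycki--Wise) to the conclusion that $\pi_1(N)$ is virtually RFRS, and then apply Agol's virtual fibering theorem \cite[Theorem~5.1]{Ag08} to produce a finite cover $p\colon\ti N\to N$ in which $p^*\phi$ lies in the closure of a fibered cone of the Thurston norm ball, which is precisely the definition of virtually quasi-fibered. For part (1), however, you take a genuinely different route. You invoke largeness of $\pi_1(N)$ --- a finite-index subgroup surjecting onto $F_2$ --- and pull back finite-index subgroups of $F_2$ of large abelianization to obtain covers $\ti N$ with $b_1(\ti N)>k$. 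The paper instead observes that ``virtually RFRS and not virtually solvable'' directly forces unbounded virtual $b_1$, citing \cite{Ag08} and \cite{AFW12} for the details, and remarks that for non-hyperbolic $N$ this already follows from older work of Kojima \cite{Ko87} and Luecke \cite{Lu88}. Both routes are sound consequences of the same cube-complex machinery. Your largeness argument is somewhat more self-contained once largeness is granted as a black box (the finite covers of $F_2$ do the work explicitly), while the paper's route stays within the RFRS framework that part (2) already requires, so it cites one fewer deep input and keeps the two parts of the theorem resting on a common hypothesis.
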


\begin{proof}
Let $N$ be an irreducible 3-manifold which is not a  graph manifold.  If $N$ is hyperbolic then it follows from work of  Agol \cite{Ag13}, building on earlier work of Wise \cite{Wi09,Wi12a,Wi12b} that $\pi_1(N)$ is `virtually special' which implies that `$\pi_1(N)$ is virtually RFRS', see also \cite{AFW12} for detailed references.
(The precise meaning of `virtually RFRS' is of no concern to us.) The same conclusion in the non-hyperbolic case has been obtained  by Przytycki and Wise \cite{PW12}.

The fact that $\pi_1(N)$ is virtually RFRS  and not virtually solvable readily implies that given any $k\in \N$ there exists a finite cover $\ti{N}$ of $N$ with $b_1(\ti{N})>k$. We refer to \cite{Ag08} or \cite{AFW12} for details.
(Note that if $N$ is not hyperbolic, then conclusion (1) also follows from  earlier work of Luecke and Kojima \cite{Ko87,Lu88}.)

Furthermore Agol \cite[Theorem~5.1]{Ag08} (see also \cite[Theorem~5.1]{FK12}) showed that if $N$ is an irreducible 3-manifold such that $\pi_1(N)$ is virtually RFRS, then given any class in $H^1(N;\Q)$ there exists a finite cover $p\colon \ti{N}\to N$ such that $p^*\phi$ sits in the closure of a fibered face of the Thurston norm ball \cite{Th86}. This implies in particular that $\phi$ is virtually quasi-fibered.
\end{proof}

\begin{remark}
Let $N$ be an irreducible 3-manifold which is not a  graph manifold. The combination of statements (1) and (2) of the theorem implies  in particular that $N$ is virtually fibered.
\end{remark}

Let $N$ be an irreducible 3-manifold which is not a  graph manifold. It follows immediately from Corollary \ref{cor:sympproduct} and Theorem \ref{thm:apw2} that $X=S^1\times N$ is virtually symplectic, thus proving Theorem \ref{theorem13} (A) in the product case.

The case of a non-trivial $S^1$-bundle over $N$ is more delicate though. In Section \ref{section:thurstonnorm} we will first recall the main properties of the Thurston norm before we can give a proof of the general case of Theorem \ref{theorem13} (A) in Section \ref{section:theorem13}.

\subsection{The Thurston norm and fibered classes}\label{section:thurstonnorm}

Let $N$ be a 3-manifold and let $\phi\in H^1(N;\Z)$. It is well--known that  any class in $H^1(N;\Z)$ is dual to a properly embedded surface. Now recall that the  Thurston norm of $\phi$ is defined as
\[ x_N(\phi):= \min\{\chi_-(\S)\, |\, \S \subset N\mbox{ properly embedded and dual to }\phi\}.\]
Here, given  a surface $\S$ with connected components $\S_1\cup\dots \cup \S_k$
we define  its complexity by $\chi_-(\S):=\sum_{i=1}^k \max\{-\chi(\S_i),0\}$.
Thurston \cite{Th86} showed that  $x_N$ is a seminorm on $H^1(N;\Z)$ which thus can be extended to a seminorm on $H^1(N;\Q)$ which we also denote by $x_N$. Thurston furthermore proved that  the Thurston norm ball
\[ B(N):=\{ \phi \in H^1(N;\Q)\, | \, x_N(\phi)\leq 1 \}\]
 is a (possibly non--compact) finite convex polytope.

Moreover, Thurston \cite{Th86} showed that there exist open top--dimensional faces $F_1,\dots,F_r$ of $B(N)$
such that the set of fibered classes in $H^1(N;\Q)$ equals the union of the open cones on $F_1,\dots,F_r$.
The faces $F_1,\dots,F_r$ are referred to as the \emph{fibered faces} of $B(N)$.

Now let $p\colon \ti{N}\to N$ be a finite cover of degree $k$ and let $\phi\in H^1(N;\Q)$. Then
\be \label{equ:tnfinitecover} x_{\ti{N}}(p^*\phi)=k\cdot x_N(\phi),\ee
furthermore $ \phi$ is fibered if and only if $p^*\phi$ is fibered. We refer to  \cite[Corollary~6.18]{Ga83} for a proof of (\ref{equ:tnfinitecover}), while the statement regarding finite covers of fibered classes can be proved using Stallings' theorem \cite{St62}.

\subsection{The proof of Theorem \ref{theorem13} (A)}\label{section:theorem13}

We will now provide the proof of  Theorem \ref{theorem13}~(A). Let $N$ be an irreducible 3-manifold which is not a graph manifold and let $X$ be any $S^1$-bundle over $N$. We have to show that $X$ is virtually symplectic.

We denote by $e$ the Euler class of the $S^1$-bundle $X\to N$. If $e=0$, then $X=S^1\times N$ and, as we pointed out above, the theorem follows immediately from Corollary \ref{cor:sympproduct} and Theorem \ref{thm:apw2}.
If $e$  is torsion, then it is well-known that there exists a finite cover of $N$ such that the pull-back $S^1$-bundle has trivial Euler class. We refer to \cite[Proposition~3]{Bow09} and \cite[Proof~of~Theorem~2.2]{FV11c} for details. It thus follows again from Theorems \ref{thm:symp} and \ref{thm:apw2} that $X$ is virtually symplectic.

We now suppose that $e$ is non-torsion. We denote by $p_*\colon H^2(X;\Q)\to H^1(N;\Q)$ the map that is given by integration along the fiber. The Gysin sequence then says that the following is an exact sequence:
\be \label{equ:gysin} H^2(X;\Q)\xrightarrow{p_*} H^1(N;\Q)\xrightarrow{\cup e} H^3(N;\Q).\ee
Since $e$ is non-torsion it follows that $p_*(H^2(X;\Q))\subset H^1(N;\Q)$ is a codimension one subspace.

We now note that it follows from the work of Agol \cite{Ag13} that $N$ admits a finite cover such that the Thurston norm ball of the finite cover has at least three different  top dimensional faces. We refer to \cite[Proposition~8.12]{AFW12} for details. Since we only care about virtual properties we can now without loss of generality assume that  the Thurston norm ball of $N$ has at least three different top dimensional faces.

Since the Thurston norm ball of $N$ has at least three different top dimensional faces and since $p_*(H^2(X;\Q))$ is a codimension one subspace of $H^1(N;\Q)$ it now follows that $p_*(H^2(X;\Q))$ intersects a top-dimensional face $F$ of the Thurston norm ball of $N$. Note that the intersection is necessarily a codimension one intersection.
We pick a class $\phi$ in the intersection.

It now follows from Theorem \ref{thm:apw2}  that  there exists an $n$-fold regular cover $g\colon \ti{N}\to N$, such that $\ti{\phi}:=g^*\phi\in H^1(\ti{N};\Q)$ is quasi--fibered.

We denote by $f\colon \ti{X}\to X$ the induced cover. We thus get the following commutative diagram
\[ \xymatrix{ \ti{X}\ar[d]^f\ar[r]^{\ti{p}}& \ti{N}\ar[d]^g \\ X\ar[r]^p&N.}\]

We now consider the map \[ \frac{1}{n}g^*\colon  H^1(N;\Q)\to H^1(\ti{N};\Q).\]
It is well-known that this map is injective, and by (\ref{equ:tnfinitecover})  this map
is furthermore an isometry of vector spaces with a seminorm. In particular $\ti{F}:=\frac{1}{n}g^*(F)$ is a face of the Thurston norm ball of $\ti{N}$.

Since $\ti{\phi}:=g^*\phi\in H^1(\ti{N};\Q)$ is quasi--fibered it follows that $\ti{F}$ sits on the boundary of
a fibered face $\ti{G}$ of the Thurston norm ball of $\ti{N}$. Recall that the intersection of   $p_*(H^2(X;\Q))$ and $F$ is of codimension one in $F$. It follows from the above discussion that  the intersection of   $p_*(H^2(\ti{X};\Q))$ and $\ti{F}$ is also of codimension one in $\ti{F}$. Since $p_*(H^2(\ti{X};\Q))$ is of codimension one in $H^2(\ti{N};\Q)$ and since $\ti{G}$ is a top-dimensional face it now follows that the intersection of $p_*(H^2(\ti{X};\Q))$ with the \emph{interior} of  $\ti{G}$ is also non-trivial.

We thus showed that $p_*(H^2(\ti{X};\Q))$  contains a fibered class, which implies by Theorem \ref{thm:symp}
that $\ti{X}$ is symplectic. This concludes the proof of Theorem \ref{theorem13} (A).

\subsection{The case when the base manifold is $S^1\times \S$} \label{section:example13}

A key ingredient in the proof of Theorem \ref{theorem13} was that for the given $N$ any class in $H^1(N;\Z)$ is virtually quasi-fibered. This information on its own is not enough though to guarantee that any $S^1$-bundle over $N$ is virtually symplectic. Indeed, consider $N=S^1\times \S$ where $\S$ is a  surface. We denote by $c=[S^1\times *]\in H_1(S^1\times \S;\Z)$ the class represented by the $S^1$-factor. It is well-known that a class $\phi \in H^1(S^1\times \S;\Q)$  is fibered if and only if $\phi(c)\ne 0$. In particular any class in  $H^1(S^1\times \S;\Q)$ is quasi-fibered. Given $e\in H^2(S^1\times \S;\Z)$ we now denote by $p\colon X_e\to S^1\times \S$ the $S^1$-bundle which corresponds to the Euler class $e$.

We can now formulate the following lemma.

\begin{lemma}\label{lem:s1surface}
Let $N=S^1\times \S$ where $\S$ is a  surface and let $e\in H^2(S^1\times \S;\Z)$.
Then the following are equivalent:
\bn
\item $X_e$ is symplectic,
\item $X_e$ is virtually symplectic,
\item $e$ is not a non-zero multiple of $PD(c)\in H^2(N;\Z)$.
\en
\end{lemma}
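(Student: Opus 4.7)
The plan is to prove $(3)\Rightarrow(1)\Rightarrow(2)\Rightarrow(3)$, the middle implication being automatic.

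For $(1)\Leftrightarrow(3)$, I would combine Theorem~\ref{thm:symp} with the Gysin sequence
\[
H^2(X_e;\Q)\xrightarrow{p_*}H^1(N;\Q)\xrightarrow{\cup e}H^3(N;\Q),
\]
reducing symplecticity of $X_e$ to whether $\ker(\cup e)$ contains a fibered class, i.e., a $\phi$ with $\phi(c)\neq 0$. In the K\"unneth splitting $H^1(N;\Q)=\Q\alpha\oplus H^1(\S;\Q)$, with $\alpha$ dual to $c$ and $\tau:=PD(c)=pr_{\S}^{*}[\omega_{\S}]$, one has $H^2(N;\Q)=\alpha\cdot H^1(\S;\Q)\oplus\Q\tau$; writing $e=\alpha\cdot\xi+n\tau$ and $\phi=a\alpha+\eta$ and computing directly in $H^3(N;\Q)\cong\Q$ (using that $\alpha\cup\tau$ generates, $\eta\cup\tau=0$, and the intersection form $\langle\,,\,\rangle_{\S}$ on $\S$) gives
\[
\phi\cup e=\bigl(an-\langle\eta,\xi\rangle_{\S}\bigr)[N]^{*}.
\]
Non-degeneracy of the intersection form on $\S$ shows this vanishes for some $a\neq 0$ iff $\xi\neq 0$ or $n=0$, which is precisely condition~(3).

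For $\neg(3)\Rightarrow\neg(2)$, suppose $e=n\cdot PD(c)$ with $n\neq 0$. Since $\tau$ is pulled back from $\S$, so is $e$; hence $X_e\to N$ is the $pr_{\S}$-pullback of the $S^1$-bundle $Y_n\to\S$ with Euler class $n$, so $X_e\cong S^1\times Y_n$. Given any finite cover $\ti{X}\to X_e$ with $K:=\pi_1(\ti{X})\leq\pi_1(X_e)=\Z\times\pi_1(Y_n)$, the product subgroup $(K\cap\Z)\times(K\cap\pi_1(Y_n))\leq K$ has finite index; passing to the corresponding further finite cover, I may assume $\ti{X}=S^1\times\ti{Y}$ for a finite cover $\ti{Y}\to Y_n$. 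A symplectic form on $\ti{X}$ pulls back to this further cover, so it suffices to show $S^1\times\ti{Y}$ is never symplectic, which by Corollary~\ref{cor:sympproduct} reduces to showing $\ti{Y}$ does not fiber over $S^1$.

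The free $S^1$-action on $Y_n$ lifts (possibly after replacing $S^1$ by a finite cover of itself, still $S^1$) to a free action on $\ti{Y}$, making $\ti{Y}\to\ti{\S}:=\ti{Y}/S^1$ a principal $S^1$-bundle over a surface finitely covering $\S$; a base-and-fiber degree count shows its Euler number is $nd_b/d_f\neq 0$, where $d_b,d_f$ are the base and fiber degrees of $\ti{Y}\to Y_n$. Hence $\ti{Y}$ is a closed Seifert fibered $3$-manifold with non-zero Seifert Euler invariant, and---away from the degenerate case where its base $\ti{\S}$ is a torus---does not fiber over $S^1$, giving the desired contradiction. The principal technical hurdle will be the careful lifting of the $S^1$-action to $\ti{Y}$ and the verification of the Euler number formula above; a secondary subtlety is the potential nilmanifold case $\ti{\S}\cong T^2$, which forces $\S\cong T^2$ and must be treated on its own.
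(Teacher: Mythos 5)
Your proof of $(1)\Leftrightarrow(3)$ is essentially the paper's argument (Gysin sequence plus Theorem~\ref{thm:symp}), just with the cup-product computation made explicit; both you and the paper reduce ``fibered'' to ``$\phi(c)\neq 0$''. Your argument for $\neg(3)\Rightarrow\neg(2)$ is a genuinely different route. The paper simply notes that any finite cover of $X_e$ is again an $S^1$-bundle over some $S^1\times F$, observes that the condition ``$e$ is not a non-zero multiple of $PD(c)$'' (being ``good'') is preserved under such covers, and cites the already-established $(1)\Leftrightarrow(3)$. You instead decompose $X_e\cong S^1\times Y_n$, use the clean product-subgroup trick $(K\cap\Z)\times(K\cap\pi_1(Y_n))\leq_{f.i.}K$ to reduce to product covers $S^1\times\ti{Y}$, and then need Corollary~\ref{cor:sympproduct} plus Seifert Euler-number bookkeeping to conclude $\ti{Y}$ does not fiber. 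Your route is more self-contained and avoids the implicit claim that finite covers of $S^1\times\Sigma$ are themselves products with the ``right'' $c$, but it requires more machinery (lifting the $S^1$-action, the covering formula $e(\ti{Y})=(d_b/d_f)\,e(Y_n)$, and the fact that a closed Seifert manifold over a hyperbolic base with non-zero Euler number does not fiber over $S^1$).

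The issue you flag at the end is not a ``secondary subtlety'' to be patched: the lemma as stated is \emph{false} when $\Sigma$ is a torus. For $\Sigma=T^2$ and $e=n\cdot PD(c)$ with $n\neq 0$, your own decomposition gives $X_e\cong S^1\times Y_n$ where $Y_n$ is the degree-$n$ Heisenberg nilmanifold; this $Y_n$ fibers over $S^1$ (as a $T^2$-bundle with monodromy $\bigl(\begin{smallmatrix}1&n\\0&1\end{smallmatrix}\bigr)$), so $X_e$ \emph{is} symplectic by Corollary~\ref{cor:sympproduct}, contradicting $(1)\Rightarrow(3)$. The culprit in both your argument and the paper's is the asserted equivalence ``$\phi\in H^1(S^1\times\Sigma;\Q)$ is fibered iff $\phi(c)\neq 0$,'' which is correct for $\Sigma$ of genus $\neq 1$ but fails for $T^3$, where \emph{every} non-zero class is fibered. (One can also see the failure directly from Theorem~\ref{thm:symp}: for $e\neq 0$, $\ker(\cup e)\subset H^1(T^3;\Q)$ is $2$-dimensional and hence contains non-zero, hence fibered, classes.) This is harmless for the paper, which only invokes the lemma when $\Sigma$ has genus $\geq 2$ (inside the proof of Theorem~\ref{theorem22}, case (iii)), but the lemma's hypothesis should read ``$\Sigma$ a surface of genus $\neq 1$.'' You should state this explicitly rather than defer it; as written your plan suggests the torus case can be salvaged, which it cannot.
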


\begin{proof}
Let  $e\in H^2(S^1\times \S;\Z)$. Note that the Gysin sequence implies that
\[ p_*(H^2(X_e;\Q))=\ker(\cup e:H^1(S^1\times \S;\Q)\to H^3(S^1\times \S;\Q)).\]
It follows easily that
\[ p_*(H^2(X_e;\Q)) \subset \{\phi\in H^1(S^1\times \S;\Q)\,|\, \phi(c)=0\}=\{\mbox{non-fibered $\phi$}\}\]
if and only if $e$ a non-zero multiple of $PD(c)$.
The equivalence of statements (1) and (3) is now an immediate consequence of Theorem \ref{thm:symp}.

It is obvious that (1) implies (2). It remains to show that (2) implies (3). Let $S^1\times F$ be a product manifold  where $F$ is a  surface and let $e\in H^2(S^1\times F;\Z)$. In the following we say that the $S^1$-bundle $X_e\to S^1\times F$ is \emph{good} if  $e$ is not a non-zero multiple of $PD(c)\in H^2(S^1\times F;\Z)$.

Let $\ti{X}$ be a finite cover of $X_e$. Note that $\ti{X}$ is naturally an $S^1$-bundle over a finite cover of $S^1\times \S$. It is  straightforward to see that the $S^1$-bundle $\ti{X}$ is good if and only if $X_e$ is good.

In particular, if $X_e$ admits a finite cover $\ti{X}$ which is  symplectic, then by the equivalence of (1) and (3) it follows that $\ti{X}$ is good, which by the above implies that $X_e$ is good.
\end{proof}

We conclude with the following question:

\begin{question}
Are there any other (virtually) fibered graph manifolds besides $S^1\times \S$ that admit $S^1$-bundles which are
not virtually symplectic?
\end{question}

\section{Fibering over a surface}

Let $F \hookrightarrow X \stackrel{f}{\rightarrow} B$ be a surface bundle over a surface with $F \cong \Sigma_g$ and $B \cong \Sigma_h$. It follows from a classical argument of Thurston's \cite{Th76} that if the homology class of the fiber $[F] \neq 0$ in $H_2(X ; \R)$, then $X$ can be equipped with a symplectic structure --- which moreover makes all fibers symplectic. The first Chern class of an almost complex structure associated to the fibration $f$ gives a class in  $H^2(X; \R)$ evaluating on $[F]$ as $\chi(F)=2-2g$, which implies that $X$ always admits a symplectic structure when $g \neq 1$.

When the fiber genus $g=1$, there are three cases to consider, depending on the base genus $h$ of the fibration:

(i) $h=0$: $X$ is either $S^2 \x T^2$, which can be equipped with the product symplectic form, or otherwise it is $S^1 \x S^3$ or $S^1 \x L(p,1)$; see \cite[Lemma~10]{BK}. The latter manifolds all have $b^+=0$, so do their finite covers, implying that none is virtually symplectic. \

(ii) $h=1$: This case is analyzed in detail by Geiges \cite{Geiges}, who showed that for all possible $T^2$-fibrations over $T^2$, the total space $X$ is symplectic, even if the fibration is not necessarily symplectic, i.e if the fiber is not homologically essential. \

(iii) $h \geq 2$: This last case was studied by Walczak in \cite{Walczak}, from which we can deduce that the following statements are equivalent:
\bn
\item  $X$ is not symplectic,
\item $[F] = 0$ in $H_2(X; \R)$,
\item $X$ is either an $S^1$-bundle over a non-trivial $S^1$-bundle over $B$, or $X$ is a non-trivial $S^1$-bundle over $S^1\times B$ such that $e(X)$ is a non-zero multiple of the Poincar\'e dual of $[S^1]\in H_1(S^1\times B;\Z)$.
\en

Here, the equivalence of (1) and (2) is stated in \cite[Theorem~4.9]{Walczak}.
We now show the  equivalence of (1) and (3).
First, if  the $T^2$-bundle over $B$ does not admit a free fiber preserving $S^1$ action,  then it follows from \cite[Proof~of~Theorem~4.9]{Walczak} that $X$ is symplectic, and it is clear that (3) does not hold. 
We can thus turn to the case that  $B$ admits a free fiber preserving $S^1$ action. Put differently,  $X$ is an $S^1$-bundle over a 3-manifold $N$, that in turn is an $S^1$-bundle over $B$.
If $N$ is a non-trivial $S^1$-bundle over $B$, then it follows from \cite[Theorem~3.1]{Walczak} that $X$ is not symplectic. So we are left with the case that $N=S^1\times B$. 
But this case was dealt with in Section~\ref{section:example13} above.

If $X$ is a manifold as described in (3), then any finite cover is also of the same type. This implies that if $X$ is not symplectic, then $X$ is also not virtually symplectic.

Combining all the results we have spelled out above, we get:

\begin{thm}\label{theorem22}
Let $F \to X \stackrel{f}{\rightarrow} B$ be a surface bundle over a surface with $F \cong \Sigma_g$ and $B \cong \Sigma_h$. Then
the following are equivalent:
\begin{enumerate}
\item $X$ is virtually symplectic,
\item $X$ is symplectic,
\item $g \neq 1$; or $g=1=h$, or $g=1\neq h$ and $[F] \neq 0 \in H_2(X;\R)$,
\item $[F]$ is a homologically essential surface in $X$, or $g=1$ and $h=1$.
\end{enumerate}
\end{thm}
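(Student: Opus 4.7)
The plan is to establish the four equivalences by assembling the pieces laid out in the discussion above. The trivial direction $(2)\Rightarrow(1)$ is immediate, and $(3)\Leftrightarrow(4)$ follows at once from the identity $c_1(X)\cdot[F]=\chi(F)=2-2g$ for the Chern class of an almost complex structure compatible with $f$: for $g\neq 1$ this forces $[F]\neq 0$ automatically, and for $g=1$ the two clauses are syntactically identical. The real content is thus $(2)\Leftrightarrow(3)$ together with the virtual-to-genuine implication $(1)\Rightarrow(2)$.

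For $(2)\Leftrightarrow(3)$, I would carry out the case analysis by $(g,h)$ already sketched. When $g\neq 1$, Thurston's construction \cite{Th76} furnishes a symplectic form using the automatic non-vanishing of $[F]$, so both (2) and (3) hold. When $g=1$ I would split into $h=0$, $h=1$, $h\geq 2$ and invoke cases (i)--(iii) respectively: \cite[Lemma~10]{BK} narrows the possibilities to $\{S^2\x T^2,\,S^1\x S^3,\,S^1\x L(p,1)\}$, among which only $S^2\x T^2$ is symplectic and precisely matches $[F]\neq 0$; Geiges \cite{Geiges} handles $h=1$ with all $T^2$-bundles over $T^2$ symplectic; and Walczak's \cite[Theorem~4.9]{Walczak} combined with the explicit description of non-symplectic examples as specific $S^1$-bundles, together with Lemma~\ref{lem:s1surface}, completes $h\geq 2$.

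For $(1)\Rightarrow(2)$ I would verify in each non-symplectic situation that the obstruction persists under finite covers. For $g=1,\,h=0$ with $X\in\{S^1\x S^3,\,S^1\x L(p,1)\}$, every finite cover is again an $S^1$-bundle over a rational homology $3$-sphere, so $b^+=0$ is preserved and no cover can be symplectic. For $g=1,\,h\geq 2$ with $X$ non-symplectic, the structural description of $X$ as an $S^1$-bundle over an $S^1$-bundle over $B$ is inherited by any finite cover, and the Euler-class clause of the second type transfers by Lemma~\ref{lem:s1surface} applied to the covering manifold; hence no finite cover is symplectic.

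The main obstacle I anticipate is the bookkeeping in the case $g=1,\,h\geq 2$: one must confirm that every finite cover of $X$ really inherits the iterated $S^1$-bundle structure used in the characterization of non-symplecticness, and that the condition ``non-zero rational multiple of $PD([S^1])$'' is invariant under pullback along covers of $S^1\x B$. Once this virtual stability is in hand, the equivalences $(1)\Leftrightarrow(2)\Leftrightarrow(3)\Leftrightarrow(4)$ line up and the theorem follows.
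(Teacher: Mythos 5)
Your proposal is correct and follows essentially the same route as the paper: the trivial implication $(2)\Rightarrow(1)$, the Chern-class observation $c_1\cdot[F]=2-2g$ giving $(3)\Leftrightarrow(4)$, the case analysis in $(g,h)$ via Thurston, \cite[Lemma~10]{BK}, Geiges and Walczak for $(2)\Leftrightarrow(3)$, and the virtual stability of the non-symplectic characterizations (products with rational homology spheres when $h=0$; iterated $S^1$-bundles plus Lemma~\ref{lem:s1surface} when $h\geq 2$) for $(1)\Rightarrow(2)$. The one point you flag as an anticipated obstacle is exactly what the paper asserts and is indeed straightforward, so there is no genuine gap.
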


\noindent We thus see that by  the work of Geiges and Walczak   we completely understand  which surface bundles over surfaces are virtually symplectic. In particular this theorem implies Theorem A (2) and Theorem B (2).

\section{Fibering over the circle}


Let $M \hookrightarrow X \stackrel{f}{\rightarrow} S^1$ be a fiber bundle where $M$ is a $3$-manifold. Equivalently, $X$ is the mapping torus
 of $M$ for some orientation preserving self-diffeomorphism $\phi$ of $M$. We will call the element in the mapping class group of $M$ represented by this self-diffeomorphism of $M$, the \emph{monodromy} of the fibration $f$.
It is well-known that the diffeomorphism type of $X$ only depends on the monodromy of the fiber bundle.

Our first observation is the following:

\begin{prop}\label{prop:finiteorder}
Let $M \hookrightarrow X \stackrel{f}{\rightarrow} S^1$ be a fiber bundle where $M$ is an irreducible  $3$-manifold. If the monodromy of $f$
has finite order in the mapping class group of $M$  and if $M$ is not a graph manifold, then $X$ is virtually symplectic.
\end{prop}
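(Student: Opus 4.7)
The plan is to pass to a finite cover that splits as a product $S^1 \times M$ and then invoke Theorem~\ref{theorem13}~(A).

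First, I would use the finite order hypothesis on the monodromy. If $\phi$ represents the monodromy and has order $n$ in the mapping class group of $M$, then $\phi^n$ is isotopic to the identity diffeomorphism of $M$. Consider the $n$-fold cyclic cover $\widetilde{X} \to X$ associated with the index $n$ subgroup $n\Z \subset \Z = \pi_1(S^1)$ pulled back via $f_\ast$. Concretely, $\widetilde{X}$ is the mapping torus of $\phi^n$, and since $\phi^n$ is isotopic to $\mathrm{id}_M$, the mapping tori of $\phi^n$ and $\mathrm{id}_M$ are diffeomorphic. Hence $\widetilde{X} \cong S^1 \times M$.

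Next, I would apply the already established part (A) of Theorem~\ref{theorem13}: since $M$ is irreducible and not a graph manifold, the product $S^1 \times M$ is an $S^1$-bundle over $M$ and therefore admits a finite cover $\widetilde{\widetilde{X}}$ that is symplectic. (Alternatively, one could cite Corollary~\ref{cor:sympproduct} together with Theorem~\ref{thm:apw2}, since $M$ is in particular virtually fibered, yielding a finite cover of $S^1 \times M$ of the form $S^1 \times \widetilde{M}$ with $\widetilde{M}$ fibered.) Composing the covering maps $\widetilde{\widetilde{X}} \to \widetilde{X} \to X$ gives a finite cover of $X$ that is symplectic, so $X$ is virtually symplectic.

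There is no real obstacle here; the proof reduces immediately to the product case, which has already been handled. The only step that deserves a brief justification is the diffeomorphism $\widetilde{X}\cong S^1\times M$, which follows from the standard fact that the diffeomorphism type of a mapping torus depends only on the isotopy class of the gluing map, combined with orientability (preserved throughout since $\phi$ is assumed orientation preserving).
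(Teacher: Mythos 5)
Your proof is correct and follows essentially the same route as the paper: pass to the cyclic cover corresponding to $n\Z\subset\Z$, identify it with $S^1\times M$ using that $\phi^n$ is isotopic to the identity, and then invoke Theorem~\ref{theorem13}~(A). If anything, you are slightly more careful than the paper in explicitly noting that the mapping torus only depends on the isotopy class of the gluing diffeomorphism.
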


\begin{proof}
Let $\phi$ be an orientation preserving self-diffeomorphism of $M$ representing the monodromy of $f$.
By assumption, there is a finite iteration $\phi ^k = \text{id}_M$. Pulling back the bundle $f\colon  X \to S^1$ via the $k$-fold covering of $S^1$, we get another bundle $M \hookrightarrow \tilde{X} \stackrel{\tilde{f}}{\rightarrow} S^1$, where $\tilde{X}=M \times S^1$ is a finite cover of $X$. It then follows from Theorem A (1), applied to $A=S^1$ and $B=M$, that $\tilde{X}$ is virtually symplectic, so $X$ is virtually symplectic.
\end{proof}

From this proposition we obtain the following corollary which is precisely the statement of Theorem~A (3).

\begin{corollary}
Let $M$ be an irreducible $3$-manifold such that all JSJ pieces are hyperbolic, then any fiber bundle over $S^1$ with fiber $M$ is virtually symplectic.
\end{corollary}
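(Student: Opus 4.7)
The plan is to exhibit, on some finite cover $\tilde{X}'$ of $X$, the structure of a surface bundle over $T^2$ and then invoke Theorem~\ref{theorem22}: every surface bundle over a torus base is symplectic. Starting from the given fibration $M\hookrightarrow X\to S^1$, I will produce (after a finite cover) a second, monodromy-preserving fibration $\tilde{M}\to S^1$ of the fiber itself, so that the two $S^1$-factors can be combined into a $T^2$ base. The crucial input is that $M$, being irreducible with only hyperbolic JSJ pieces, is not a graph manifold, so Theorem~\ref{thm:apw2} supplies fibered finite covers of $M$.

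First I would take a fibered finite cover of $M$ given by Theorem~\ref{thm:apw2} and pass to its normal core in $\pi_1(M)$, obtaining a finite regular cover $\tilde{M}\to M$ which is $\phi_\ast$-invariant and still fibered (since fiberedness of a class is preserved by finite covers). Then $\phi$ lifts to $\tilde{\phi}\colon\tilde{M}\to\tilde{M}$ and the mapping torus of $\tilde{\phi}$ is a finite cover $\tilde{X}$ of $X$. Next, I would arrange, after replacing $\tilde{\phi}$ by a sufficiently divisible power $\tilde{\phi}^N$ (equivalently, pulling back $\tilde{X}\to S^1$ along an $N$-fold cyclic cover of $S^1$), that $(\tilde\phi^N)^\ast$ on $H^1(\tilde{M};\Q)$ strictly fixes a fibered cohomology class. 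The key points are: $\tilde{\phi}^\ast$ preserves the Thurston norm, so it permutes the finitely many top-dimensional faces of $B(\tilde{M})$; at least one such face $F$ is a fibered face because $\tilde{M}$ is fibered; and $F$ is bounded because $\tilde{M}$, having all JSJ pieces hyperbolic (covers of hyperbolic pieces remain hyperbolic), is neither Euclidean, Nil, nor Sol, and so is not a $T^2$-bundle over $S^1$. For $N$ large enough, $(\tilde\phi^N)^\ast$ preserves $F$ setwise and acts on its finitely many vertices with finite order, so averaging any rational interior point of $F$ over its orbit produces a fibered rational class $[\eta]\in H^1(\tilde{M};\Q)$ that is fixed by $(\tilde\phi^N)^\ast$.

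A standard isotopy-extension argument then shows $\tilde{\phi}^N$ is isotopic to a diffeomorphism $\phi'$ satisfying $\pi\circ\phi'=\pi$, where $\pi\colon\tilde{M}\to S^1$ is a fibration representing a positive integer multiple of $[\eta]$. The mapping torus $\tilde{X}'$ of $\phi'$, which is a finite cover of $X$, therefore admits a smooth submersion $(x,s)\mapsto(\pi(x),s)$ onto $T^2=S^1\times S^1$, exhibiting $\tilde{X}'$ as a surface bundle over $T^2$ with fiber $\Sigma=\pi^{-1}(\mathrm{pt})$. Since the base has genus $h=1$, Theorem~\ref{theorem22} directly gives that $\tilde{X}'$ is symplectic, hence $X$ is virtually symplectic. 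The main obstacle will be producing the $(\tilde\phi^N)^\ast$-invariant fibered class in the second step: this simultaneously requires Agol's virtual fibering theorem (to ensure $\tilde{M}$ has a fibered face to begin with), the polytope finiteness of the Thurston norm ball (so the averaging runs over a finite orbit), and the exclusion of torus fibrations on $\tilde{M}$ (so the fibered face is bounded and the averaging is well-defined).
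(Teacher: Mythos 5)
Your approach is genuinely different from the paper's. The paper observes that, since the JSJ tori are unique up to isotopy, a power of the monodromy $\phi$ fixes each torus setwise, and then Mostow rigidity applied to each hyperbolic JSJ piece shows a further power is isotopic to the identity; hence the whole mapping class group of $M$ is finite, and Proposition~\ref{prop:finiteorder} (which pulls back to $M\times S^1$ and invokes Theorem~A(1)) finishes the job. You instead try to manufacture, on a finite cover, a monodromy-invariant fibered class, exhibit a surface-bundle-over-$T^2$ structure, and then quote Theorem~\ref{theorem22}. That reduction is a nice idea and, if carried out, would give an alternative proof that bypasses Proposition~\ref{prop:finiteorder}.

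However, there is a genuine gap in your second step. You assert that the fibered face $F\subset B(\tilde M)$ is bounded because $\tilde M$ is not a $T^2$-bundle over $S^1$. That implication is false. The Thurston norm ball $B(\tilde M)$ is bounded precisely when $x_{\tilde M}$ is a genuine norm, i.e.\ when $\tilde M$ contains no nonseparating essential tori (or spheres). But a $3$-manifold with only hyperbolic JSJ pieces can perfectly well have nonseparating JSJ tori: take a one-cusped-per-end hyperbolic manifold $P$ with two boundary tori and glue the two boundary components to each other. The resulting closed manifold is irreducible with a single hyperbolic JSJ piece, yet the gluing torus is nonseparating, so its Poincar\'e dual is a nonzero class of Thurston norm zero. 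In such a situation the norm has a nontrivial null space $V$, every top-dimensional face of $B(\tilde M)$ is unbounded, and the vertex-averaging argument you describe has nothing to average over. Worse, a linear norm-isometry preserving an unbounded face $F$ can act as a translation in the null direction (schematically, $(x,1)\mapsto(x+1,1)$ on the strip $|y|\le 1$), in which case it has no fixed point in $F$ at all; ruling this out requires knowing that $\tilde\phi^\ast$ has finite order on $H^1(\tilde M;\Q)$, which in turn is essentially the finiteness of the mapping class group that the paper derives from Mostow rigidity and which your argument was designed to avoid. So the averaging step is not justified as written, and the missing input is precisely the paper's key observation.

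A minor further caveat: the passage from ``$(\tilde\phi^N)^\ast$ fixes a fibered class'' to ``$\tilde\phi^N$ is isotopic to a fibration-preserving diffeomorphism'' is correct but is not merely an isotopy-extension argument; it uses that two fibrations of a Haken $3$-manifold over $S^1$ representing the same primitive cohomology class have isotopic fibers (a consequence of Stallings/Waldhausen), and one should say so. This does not affect the substance, but the boundedness claim above does.
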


\begin{proof} Let $M \hookrightarrow X \stackrel{f}{\rightarrow} S^1$ be a fiber bundle where $M$ is an irreducible $3$-manifold such that all JSJ pieces are hyperbolic. Let $\phi$ be an orientation preserving self-diffeomorphism of $M$ representing the monodromy of $f$. The JSJ tori of $M$ are unique up to isotopy, it follows that $\phi$ permutes the isotopy classes of the JSJ tori, in particular there exists a $k$ such that $\phi^k$ fixes the isotopy class of each JSJ torus. We now pick an orientation preserving self-diffeomorphism $\psi$ of $M$ which is isotopic to $\phi^k$
and which fixes each JSJ torus setwise. In particular $\psi$ restricts to a self-diffeomorphism of each JSJ component.

It is a consequence of Mostow rigidity that the mapping class group of a hyperbolic $3$-manifold is finite. It follows that there exists an $l$ such that $\psi^l$ is isotopic to the identity. We thus showed that the mapping class group of $M$ is finite.

It now follows from Proposition \ref{prop:finiteorder} that $X$ is virtually symplectic.
\end{proof}

We can now state and prove our final theorem, which should be regarded as an analogue of McCarthy's main theorem in \cite{McCarthy}. 

\begin{theorem} \label{mainthm4}
Let $M \hookrightarrow X \stackrel{f}{\rightarrow} S^1$ be a fiber bundle, where $M$ is a  $3$-manifold.  If $X$ is virtually symplectic, then $M$ does not admit a non-trivial decomposition $M=M_1 \# M_2$ preserved by the monodromy of $f$ where $M_1$ and $M_2$ are non-spherical $3$-manifolds.  
\end{theorem}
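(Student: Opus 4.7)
The plan is to argue by contradiction using Seiberg--Witten invariants, following the template of McCarthy's theorem on products $S^1\times N$. Assume $X$ is virtually symplectic and let $q\colon \wti X \to X$ be a finite cover with $\wti X$ symplectic. Since $\pi_1(X) = \pi_1(M)\rtimes_{\phi_*}\Z$, the finite-index subgroup $\pi_1(\wti X)$ intersects the fiber subgroup $\pi_1(M)$ in a finite-index subgroup corresponding to a finite cover $\wti M \to M$, and projects onto some $k\Z\leq\Z$; thus $\wti X$ is itself a mapping torus of $\wti M$ with monodromy $\wti\phi$ being a lift of some power $\phi^k$.

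Next I would descend the connect-sum structure to $\wti M$ and realize it geometrically. Non-sphericality gives $|\pi_1(M_i)|=\infty$, so $\pi_1(M)=\pi_1(M_1)*\pi_1(M_2)$ is a non-trivial free product of infinite groups, and the Kurosh subgroup theorem forces $\pi_1(\wti M)$ to be a non-trivial free product in which every non-trivial factor is either infinite cyclic or a finite-index subgroup of some $\pi_1(M_i)$, hence infinite. Stallings's theorem together with the sphere theorem then produces a non-trivial Kneser--Milnor decomposition of $\wti M$ containing no $S^3$ summand and in which every prime piece has infinite $\pi_1$. The separating sphere $S\subset M$ lifts to a $\wti\phi$-invariant finite family of essential embedded spheres in $\wti M$; after replacing $\wti\phi$ by a power (equivalently pulling $\wti X$ back along a finite cover of $S^1$), we may arrange $\wti\phi$ to fix a single essential separating sphere $\wti S\subset \wti M$ setwise, whose two sides capped off by $B^3$ give closed $3$-manifolds $N_1,N_2$ of infinite $\pi_1$. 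Sweeping $\wti S$ under $\wti\phi$ produces an embedded submanifold $\Sigma\cong S^2\times S^1\subset\wti X$ (the $S^2$-bundle over $S^1$ being trivial since $\mathrm{Diff}^+(S^2)$ is connected), separating $\wti X = \wti X_1\cup_\Sigma \wti X_2$, and capping each $\wti X_i$ off by $D^3\times S^1$ yields a closed $4$-manifold $\widehat X_i$ that is itself a mapping torus over $S^1$ with fiber $N_i$.

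The main obstacle is the final Seiberg--Witten step. The plan is to combine Taubes's theorem --- which gives $\SW(\wti X)\ne 0$ for symplectic $\wti X$ after passing to one more finite cover ensuring $b^+(\wti X)\geq 2$, obtainable from the Wang sequence together with the abundance of finite-index subgroups of $\pi_1(\wti M)$ of large first Betti number (available because each $\pi_1(N_i)$ is infinite) --- with a vanishing result for $\SW(\wti X)$ coming from the $S^2\times S^1$ splitting with non-trivial pieces on each side. The strategy is either to apply directly a splitting/gluing formula for SW invariants along $S^2\times S^1$, using that each $\widehat X_i$ has non-trivial topology inherited from $\pi_1(N_i)$, or to adapt the Meng--Taubes formula (which expresses the SW invariants of a mapping torus in terms of twisted Reidemeister torsions of the fiber): the preserved non-trivial Kneser--Milnor decomposition of $\wti M$ forces the relevant twisted torsion, and hence $\SW(\wti X)$, to vanish. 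Either route yields a contradiction with Taubes's theorem. This last SW vanishing step, an adaptation of McCarthy's original torsion computation from products to mapping tori, is the technical heart of the proof.
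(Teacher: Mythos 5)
Your overall strategy is the same as the paper's: produce a finite cover that splits along copies of $S^1\times S^2$ into two pieces, each topologically non-trivial, and then derive a contradiction with Taubes's nonvanishing theorem for symplectic $4$-manifolds. But there are two substantive gaps, and they land exactly where the paper's proof does its real work.

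\textbf{Gap 1: no mechanism to get $b^+>0$ on \emph{both} pieces of the splitting.} You assert that the pieces $\wti X_i$ (or their capped-off versions $\widehat X_i$) have ``non-trivial topology inherited from $\pi_1(N_i)$,'' but for the Seiberg--Witten argument to run one needs the specific quantitative hypothesis $b^+(\wti X_i)>0$. Having infinite $\pi_1(N_i)$ does not deliver this, and neither does the ``abundance of finite-index subgroups of large $b_1$'': by the Wang sequence, $b_3$ (hence $b^+$) of the mapping-torus piece is controlled by the rank of the monodromy's \emph{fixed} subspace in $H_2(N_i)$, and a priori a high $b_1$ cover can have the monodromy acting on $H_2$ with small fixed part. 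The paper's proof is dominated by an explicit, somewhat elaborate construction (their Claim 1): using residual finiteness of $\pi_1(M_i)$ they build a cover $N\to M$ by gluing many copies of covers of $M_i\setminus D_i$ along lifted spheres, with carefully chosen multiplicities, so that (i) a lift $\psi$ of $\phi^n$ exists and fixes every lifted sphere, (ii) at least one lifted sphere $T$ is \emph{separating}, and (iii) the lifted spheres on each side of $T$ span a rank $\geq 3$ subspace of $H_2(N_i;\Z)$ that is pointwise fixed by $\psi$. Condition (iii) is what feeds the Wang sequence to give $b^+(Y_i)\geq 2$. Your top-down approach --- taking the cover $\wti X\to X$ as given and studying $\wti M$ --- offers no such control; indeed even the ``separating sphere'' in your step (b) is not automatic, since in a general finite cover the lifts of $S$ can all be non-separating (the dual graph can have no bridges). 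The paper's proof also does \emph{not} work with the given symplectic cover directly: it builds its own cover $Y\to X$ achieving (i)--(iii) and only afterwards takes the fiber product with the given symplectic cover $\wti X$.

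\textbf{Gap 2: the Seiberg--Witten vanishing step is left open.} You correctly identify this as the heart of the matter, but neither of your two proposed routes is what the paper uses, and neither is developed. The paper proves a clean lemma (their Lemma~\ref{SWargument}): if a closed $4$-manifold $X$ with $b^+>1$ splits as $X_1\cup X_2$ along a disjoint union of connected sums of spherical $3$-manifolds and $S^1\times S^2$'s, with $b^+(X_i)>0$, then $X$ is not symplectic. The proof uses Kronheimer--Mrowka's monopole Floer theory: the splitting $3$-manifold carries positive scalar curvature, hence has vanishing reduced monopole Floer homology $HM_\bullet$ by \cite[Prop.~36.1.3]{KM}; by \cite[Prop.~3.11.1(i)]{KM} this forces the monopole invariant of $X$ to vanish for every $\spinc$-structure; and by Taubes \cite{Ta94,Ta95} a symplectic $X$ with $b^+>1$ has nonvanishing monopole invariant at the canonical class. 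By contrast, a Meng--Taubes/torsion approach is tailored to the product case $S^1\times N$ and does not transparently adapt to mapping tori, and a general gluing formula along $S^1\times S^2$ for manifolds with $b^+>0$ on both sides is exactly the sort of statement that in the end reduces to the Floer-theoretic vanishing just described --- so you would still have to supply the \cite{KM} input.

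Smaller points: you write ``descend the connect-sum structure to $\wti M$'' where you mean lift it; and when you cap off with $D^3\times S^1$ you should note that after a further power (or isotopy) the monodromy can be taken to be the identity on the sphere, so it extends over the ball (the paper arranges this in Claim 1(1)).
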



We will use the following lemma in the proof of our theorem, coming from Seiberg-Witten theory:

\begin{lemma} \label{SWargument}
Let $X$ be a $4$-manifold with $b^+>1$ splitting as $X = X_1 \cup X_2$, where $\partial X_1 = - \partial X_2 = \coprod M_j$ is a (disjoint) union of closed $3$-manifolds and $b^+(X_i)>0$. If each $M_j$ is a connected sum of spherical $3$-manifolds and $S^1 \x S^2$s, then $X$ cannot be symplectic. 
\end{lemma}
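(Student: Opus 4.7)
The plan is to combine Taubes's nonvanishing theorem for Seiberg--Witten invariants of symplectic $4$-manifolds with a standard vanishing theorem for $SW$ invariants of $4$-manifolds that split along a $3$-manifold of positive scalar curvature (PSC). The key preliminary observation is that each $M_j$ carries a PSC metric: spherical space forms carry round metrics of positive curvature, $S^1\times S^2$ carries the obvious product PSC metric, and by Gromov--Lawson PSC is preserved under connected sums in dimension $\geq 3$. Hence the (possibly disconnected) $3$-manifold $N := \coprod_j M_j$ admits a PSC metric $g_N$, and I would equip $X$ with a Riemannian metric that is the product $dt^2\oplus g_N$ on a collar neighborhood of $N$.

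Now suppose for contradiction that $X$ is symplectic. Since $b^+(X)>1$, Taubes's theorem gives a $\spinc$ structure (the canonical one) on $X$ whose Seiberg--Witten invariant equals $\pm 1$; in particular some $SW$ invariant of $X$ is nonzero. To contradict this, I would stretch the collar of $N$ to a long cylinder $[-T,T]\times N$ and let $T\to\infty$. The Weitzenb\"ock formula, combined with positivity of the scalar curvature of $g_N$, forces the spinor of any $SW$ monopole to vanish identically on the cylindrical region. A standard neck-stretching compactness argument then shows that any sequence of monopoles on the stretched manifolds must break into a pair of finite-energy monopoles on the cylindrical-end manifolds $X_1$ and $X_2$, converging along their ends to translation-invariant \emph{reducible} solutions on $\R\times N$. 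Because $b^+(X_i)\geq 1$ for both $i$, a generic compactly supported self-dual $2$-form perturbation on each $X_i$ eliminates these reducible limits, so all Seiberg--Witten invariants of $X$ must vanish---contradicting Taubes.

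The main obstacle I anticipate is the precise formulation of the PSC vanishing theorem when $N$ is disconnected and the perturbation has to be distributed across both $X_1$ and $X_2$. For connected $N$ this is a standard neck-stretching argument in the style of Kronheimer--Mrowka; the disconnected case follows formally by stretching all components of $N$ simultaneously, since the Weitzenb\"ock estimate is local on each cylindrical end, but the bookkeeping for the reducible strata requires care. I would cite the appropriate vanishing/gluing result rather than reprove it in full.
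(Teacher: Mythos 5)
Your strategy — positive scalar curvature on the splitting $3$-manifold, a neck-stretching vanishing theorem, and Taubes's nonvanishing for symplectic $X$ with $b^+>1$ — is the same overall strategy the paper uses, so you have the right idea. Two differences are worth flagging, one cosmetic and one substantive.

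The cosmetic difference: the paper does not stretch along the disconnected manifold $\coprod_j M_j$. Instead it first tubes the components together along arcs in $X_2$ (adding $1$-handles to $X_1$ and carving them out of $X_2$), so that the splitting hypersurface becomes a single connected manifold of the form $S^3/\Gamma_1\#\cdots\# S^3/\Gamma_k\,\#\, m\,S^1\times S^2$, still with $b^+(X_i')>0$ on both sides. This reduces to a connected splitting and sidesteps the ``bookkeeping for the disconnected case'' you say requires care. You could handle disconnected $N$ directly, but the tubing trick makes the citation cleaner.

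The substantive issue is in your final sentence: ``so all Seiberg--Witten invariants of $X$ must vanish --- contradicting Taubes.'' When the splitting manifold has $b_1>0$ (which it does as soon as an $S^1\times S^2$ summand appears), the neck-stretching/Floer argument does not directly kill the individual invariant $SW(X,\xi_0)$. What vanishes is the \emph{monopole invariant}, i.e.\ the sum of $SW(X,\xi)$ over all $\spinc$ structures $\xi$ differing from $\xi_0$ by a torsion class; this is exactly what the paper extracts from \cite[Prop.~3.11.1(i)]{KM} after noting $HM_\bullet=0$ for the PSC splitting manifold by \cite[Prop.~36.1.3]{KM}. To turn vanishing of that sum into a contradiction, the paper then invokes Taubes's second theorem \cite{Ta95}: for a symplectic $X$ with $b^+>1$ the canonical class is the only $\spinc$ structure in its torsion coset with nonzero SW invariant, so the monopole invariant of $\xi_0$ equals $SW(X,\xi_0)=\pm 1\neq 0$. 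Without this step (or something equivalent), your argument only shows a sum of SW invariants vanishes, which is not yet a contradiction with Taubes's $SW(X,\xi_0)=\pm1$. This is a genuine gap as written; it is fixable by adding the \cite{Ta95} input, which is exactly what the paper does.
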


\begin{proof}
We can turn the splitting $3$-manifold into a connected one by tubing along embedded arcs in $X_2$ with end points on different $M_j$ components. It is easy to see that both new $4$-manifold pieces, obtained by \textit{adding $1$-handles} to $X_1$ and \textit{carving them} in $X_2$, still have $b^+>0$. One can see this for instance by noting that by general position any element in $H_2(X_2;\Z)$ can be represented by a surface disjoint from the chosen arcs. We therefore get a new splitting $X= \partial X'_1 \cup X'_2$ with $b^+(X'_i)>0$ such that
\[ \partial X'_1 = - \partial X'_2= S^3 / \, \Gamma_1 \# \cdots \# S^3 / \, \Gamma_k \# \, m \, S^1 \x S^2 \, , \] 
for some $k, m \geq 0$, with $\Gamma_i$, $i= 1, \ldots, k$, is a finite subgroup of $SO(4)$ acting on $S^3$ freely. Here we use the convention that $k=0$ is $\#_m S^1\times S^2$, and $k=m=0$ is $S^3$. Let us take this splitting instead, and label the components again as $X_1$ and $X_2$. 

The proof will now follow from a combination of fine results by Kronheimer-Mrowka and Taubes. In the case of $k=m=0$, the desired conclusion is a classical result: a connected sum of $X_i$ with $b^+(X_i)>0$ would have vanishing Seiberg-Witten invariants, which however is not possible for a symplectic $X$ with $b^+(X)>0$ by \cite{Ta94}. The case $k=0$ and $m=1$ is given in \cite[Proposition~1]{Bow09}, which we will generalize to any $k, m \geq 1$ (and any $\Gamma_i$) in what follows. 

As shown in \cite[Proposition~36.1.3]{KM}, for any connected $3$-manifold $Y$ admitting positive scalar curvature, the monopole Floer invariants $HM_{\bullet}(Y) = 0$. (Here $HM_{\bullet}$ is defined as the image of the $j$ map in the long exact sequence relating the three flavors of the monopole Floer groups, which is observed to be zero in Proposition~36.1.3.) Since all the summands of $Y$ listed above admit positive scalar curvature\footnote{Since a $3$-manifold with a $K(\pi,1)$ summand does not admit a positive scalar curvature \cite{GromovLawson2}, by prime decomposition and geometrization of $3$-manifolds, any closed $3$-manifold $Y$ admitting a positive scalar curvature would indeed be a connected sum of spherical $3$-manifolds and $S^1 \x S^2$s.}, so does $Y$ per connected sum being a codimension $\geq 3$ operation \cite{GromovLawson}. By \cite[Proposition 3.11.1(i)]{KM}, the vanishing of these Floer groups for the \textit{connected} $3$-manifold splitting $X$ into two pieces with $b^+>0$ as above implies that the Seiberg-Witten monopole invariant of $X$ is zero for any $Spin^c$ structure $\xi$. Here the monopole invariant of $\xi$ is defined as a sum of the Seiberg-Witten invariants of $Spin^c$ structures which differ from $\xi$ by torsion. 

Suppose that $X$ admits a symplectic structure $\omega$, then by \cite{Ta94}, for $\xi_0 = c_1(X, \omega)$, the canonical class, the Seiberg-Witten invariant evaluates as $SW(X, \xi_0) = \pm 1$. Furthermore, Taubes shows in \cite{Ta95} that there is no other $\xi \in Spin^c$ with $\xi - \xi_0$ torsion and $SW(X, \xi) \neq 0$. As observed in  \cite[Proposition~1]{Bow09}, this means that the vanishing of the monopole invariant above for $\xi_0$ would imply the vanishing of $SW(X, \xi_0)$, leading to a contradiction. This completes the proof. 
\end{proof}

\begin{proof}[Proof of Theorem~\ref{mainthm4}]
Let $M=M_1\# M_2$ be a $3$-manifold and let $\phi$  be an orientation-preserving diffeomorphism  which preserves the connected sum decomposition. This means that   there exist open balls $D_i\subset M_i$  and a separating $2$-sphere $S$ in $M$ so that
\[ M=(M_1 \setminus D_1) \cup_S (M_2 \setminus D_2),\]
and such that $\phi$  preserves the two parts $M_i\sm D_i$, $i=1,2$ and such that $\phi$ restricts to the identity on $S$.
For $i=1,2$ we henceforth write $M_i'=M_i\sm D_i$ and we denote the restriction of $\phi$ to $M_i'$ by $\phi_i$. Let $X$ be the mapping torus corresponding to $M$ and $\phi$.

We will first prove the following claim:

\begin{claim1}
There exists a  finite cover $q:N\to M$, an orientation-preserving diffeomorphism $\psi$ of $N$, and $n\in \N$
such that the following diagram commutes:
\be \label{equ:diagram} \xymatrix{ N\ar[d]_q\ar[r]^{\psi}& N\ar[d]^q \\ M\ar[r]^{\phi^n}& M}\ee
and such that the following hold:
\bn
\item $\psi$ restricts to the identity on $q^{-1}(S)$,
\item there exists a component $T$ of $q^{-1}(S)$ which separates $N$ into $N_1\# N_2$,
\item for $i=1,2$ the span of the components of $q^{-1}(S)\cap N_i$ has rank at least three in $H_2(N_i;\Z)$.
\en
\end{claim1}

Let $i\in \{1,2\}$. Since $\pi_1(M_i)$ is residually finite (see e.g. \cite{He87}) we can find an epimorphism $\a_i:\pi_1(M_i)\to G_i$,  onto a group of order at least ten. By a standard argument we can arrange that $\ker(\a_i)\subset \pi_1(M_i)$ is characteristic.
This implies that if $\wti{M}_i$ denotes the corresponding cover of $M_i$, then $\phi_i$ lifts to an
orientation-preserving diffeomorphism $\wti{\phi}_i$ of $\wti{M}_i':=\wti{M}_i\sm p_i^{-1}(D_i)$. We denote the degree of the covering map by $d_i$.
Note that $\wti{\phi}_i$ permutes the boundary components of $\wti{M}_i'$.
For an appropriately chosen $n_i\in \N$ we then see that $\wti{\phi}_i^{n_i}$ restricts to the identity on  each boundary component of $\wti{M}_i'$.
We now write $n=n_1n_2$ and  $\psi_i=\wti{\phi}_i^{n}$, $i=1,2$.

We then consider the disjoint union
\[ \underset{=:\wti{W}_1}{\underbrace{\bigcup_{i=1}^{2d_2+2} \wti{M}_1' }}\,\,\, \cup\,\,\,
\underset{=:\wti{W}_2}{\underbrace{\bigcup_{i=1}^{2d_1} \wti{M}_2' \,\cup\,\bigcup_{i=1}^{2d_1}M_2'}}.\]
Note that  $\wti{W}_1$  admits a $d_1(2d_2+2)$-covering $q_1:\wti{W}_1\to M_1'$ and that $\wti{W}_2$ admits a $d_1(2d_2+2)$-covering $q_2:\wti{W}_2\to M_2'$.
In particular both manifolds have precisely $d_1(2d_2+2)$ boundary components.

We now partition $\wti{W}_1$ into two parts $\wti{W}_{1}^a$ and $\wti{W}_{1}^b$ which each consist of $d_2+1$ components.
We then glue a copy of $\wti{M}_2'$ to $\wti{W}_1$ such that precisely one boundary component of $\wti{M}_2'$ gets glued to $\wti{W}_{1}^a$
and all other components get glued to $\wti{W}_{1}^b$. We refer to this one boundary component henceforth as $T$.
Finally we glue the remaining components of $\wti{W}_2$ either to $\wti{W}_{1}^a$ or to $\wti{W}_{1}^b$
and we denote the resulting closed manifold by $N$.

Note that $T$ is a separating sphere in $N$. Also note that we can arrange the above gluings such that $N$
is connected.
We now summarize the properties of $N$:
\bn
\item there exists a covering map $q:N\to M$ which restricts to $q_i:\wti{W}_i\to M_i$ for $i=1,2$,
\item there exists a component $T$ of $q^{-1}(S)$ separating $N$ into two components $N_1$ and $N_2$,
\item there exists a self-diffeomorphism $\psi$ of $N$ which restricts to $\psi_1,\psi_2$ and $\phi_2^n$ on the components of $\wti{W}_1$ and $\wti{W}_2$, this self-diffeomorphism turns (\ref{equ:diagram}) into a commutative diagram,
\item for $i=1,2$ the number of components of $q^{-1}(S)$ in $N_i$ is at least $d_1d_2$.
\en
 We are left with showing that for $i=1,2$, the components of $q^{-1}(S)$ which lie in $N_i$ span a subspace of $H_2(N_i;\Z)$ of rank at least three. Given $i=1,2$ we denote by $\G_i$ the graph which has one vertex for each component of $\wti{W}_1$
which lies in $N_i$ and one  vertex for each component of $\wti{W}_2$ which lies in $N_i$ and one edge for each component $q^{-1}(S)$ which lies in $N_i$. We view these vertices and edges as a graph  with the obvious  attaching map. Note that $\G_i$ is connected and that
\[ \ba{rcl} b_1(\G_i)=1-\chi(\G_i)&=&1-\# \mbox{vertices in $\G_i$}+\#\mbox{edges in $\G_i$}\\
&\geq &1-((2d_2+2)+4d_1)+d_1d_2\geq 3,\ea \]
where we now used that $d_1$ and $d_2$ are at least ten.
Note that we have a canonical projection map $g_i:N_i\to \G_i$. It is obvious that $p^* \colon H^1(\G_i;\Z)\to H^1(N_i;\Z)$ is injective and that its image is exactly the subspace of $H_2(N_i;\Z)$ which is the span of the Poincar\'e duals of the components of $p^{-1}(S)$. We thus showed that the components of $p^{-1}(S)$ span a subspace of $H_2(N_i;\Z)$ of rank at least three. This concludes the proof of the claim.

We now denote by $Y$ the mapping torus of $(N,\psi)$. Note that $Y$ is a finite cover of the mapping torus of $(M,\phi^n)$, which in turn is a finite cover of $X$. We now denote by $Y_i$, $i=1,2$ the mapping tori of $(N_i,\psi|_{N_i})$. Note that $Y_1$ and $Y_2$ are precisely the components of $Y$ cut along $S^1\times T$.

\begin{claim2}
For $i=1,2$ we have $b_2^+(Y_i)\geq 1$.
\end{claim2}

Let $i\in \{1,2\}$. We denote the restriction of $\psi$ to $N_i$ by $\psi_i$.
The Mayer-Vietoris sequence corresponding to the decomposition of $N_i$ arising from writing the base $S^1$ of the bundle as the union of two intervals gives us a long exact sequence
\[ \ldots \to 0 \to H_3(Y_i;\Z)\to H_2(N_i;\Z)\xrightarrow{\psi_i-\operatorname{id}}H_2(N_i;\Z)\to H_2(Y_i;\Z)\to \ldots \]
We denote by $V$ the subspace of $H_2(N_i;\Z)$ spanned by the components of $q^{-1}(S)$ lying in $N_i$.
Note that $V$ is invariant under $\psi_i$ and that it has rank at least three.
It follows from the long exact sequence that $b_3(Y_i)\geq 3$.

On the other hand the signature and Euler characteristic of any bundle over $S^1$ are zero. It follows now easily from $b_3(Y_i)\geq 3$ that $b_2^+(Y_i)\geq 2$. This completes the proof of our claim.

We now suppose that $X$ is symplectic. It follows immediately that the finite cover  $Y$ also admits a symplectic structure $\omega$. In the above discussion we showed that $Y$ has $b^+(Y)> 1$, and decomposes as $Y=Y_1 \cup Y_2$ with $\partial Y_1 = - \partial Y_2 = S^1 \x S^2$, for $b^+(Y_i) > 0$. This however is impossible by our Lemma~\ref{SWargument} above. 

Finally we have to consider the case that $X$ is only covered by a symplectic $4$-manifold $\tilde{X}$. For $Y \to X$ the covering we constructed above, let  $p\co \tilde{Y} \to Y$ be the pull-back of $\tilde{X}\to X$.
We write $\tilde{Y}_i=p^{-1}(Y_i)$, $i=1,2$. Note that $p^{-1}(S^1\times S^2)$ is a (possibly) disjoint union of copies of $S^1\times S^2$. We now pick an identification $\partial \tilde{Y}_i=\#_m S^1\times S^2$ for an appropriate $m$, which separates $\tilde{X}$ with $b^+(\tilde{X}) > 1$ as $\tilde{X} = \tilde{X}_1 \cup \tilde{X}_2$ and $b^+(\tilde{X}_i)$. Once again, this is impossible by Lemma~\ref{SWargument}. 
\end{proof}

\begin{remark}
In the above proof, our assumption on each component $M_i$ being non-spherical was only used to get an epimorphism from $\pi_1(M_i)$ onto a finite group of order at least $10$, for $i=1,2$. This is in fact satisfied by most spherical $3$-manifolds as well, but not all, and some condition on finite quotients is necessary: for example if $M_i= \RP$, then $X=S^1 \x ( \RP \# \RP)$ is finitely covered by $S^1\times S^1\times S^2$ which is symplectic.
\end{remark}

\enlargethispage{1cm}
\vspace{0.1in}
\noindent \textbf{Acknowledgements.} The first author was partially supported by the NSF grant DMS-0906912. We would like to thank Stefano Vidussi for an insightful suggestion we have employed in the proof of Theorem~\ref{mainthm4} and Jonathan Bloom for a reference. We also would like to thank Ian Hambleton and Mayer Landau for helpful feedback. Finally we also wish to express our gratitude to the referee for carefully reading an earlier version of this paper and for giving very helpful feedback.

\end{document}